\documentclass[11pt,twoside,reqno]{amsart}
\usepackage{amssymb}
\usepackage{enumerate}
\usepackage{amsmath}
\usepackage{a4wide}
\usepackage[usenames]{color}
\usepackage[dvipsnames]{xcolor}
\usepackage{soul}

\usepackage[hidelinks=true]{hyperref} 

\usepackage{mathtools}

\usepackage{lmodern}
\usepackage[utf8]{inputenc}
\usepackage[L7x]{fontenc} 
\usepackage{blkarray}

\newcommand{\abs}[1]{\lvert#1\rvert}

\usepackage[
   backend=bibtex, 
   style=numeric,
   isbn=false,
   giveninits=true,
   doi=false,
   url=false]{biblatex}
\newtheorem{theorem}{Theorem}

\newtheorem{lemma}[theorem]{Lemma}

\theoremstyle{remark}

\newtheorem*{acknowledgment}{\textbf{Acknowledgments}}

\def\leq{\leqslant}
\def\geq{\geqslant}
\def\al{\alpha}
\def\be{\beta}
\def\ga{\gamma}

\def\F{\mathbb F}
\def\N{\mathbb N}

\def\Q{\mathbb Q}
\def\Z{\mathbb Z}

\DeclareMathOperator{\ord}{ord}

\begin{document}

\title{No three algebraic conjugates of degree sixteen sum to zero}

\author{Žygimantas Baronėnas, Paulius Drungilas, and Jonas Jankauskas}

\address{Institute of Mathematics, Faculty of Mathematics and Informatics, Vilnius
University, Naugarduko 24, Vilnius LT-03225, Lithuania}
\email{zygimantas.baronenas@mif.stud.vu.lt}

\address{Institute of Mathematics, Faculty of Mathematics and Informatics, Vilnius
University, Naugarduko 24, Vilnius LT-03225, Lithuania}
\email{paulius.drungilas@mif.vu.lt}

\address{Institute of Mathematics, Faculty of Mathematics and Informatics, Vilnius
University, Naugarduko 24, Vilnius LT-03225, Lithuania}
\email{jonas.jankauskas@mif.vu.lt}

\subjclass[2020]{11R04, 11R09, 11R32, 12F10, 20D20} \keywords{Algebraic numbers, linear relations in algebraic conjugates, prime power degrees, $p$--groups}

\begin{abstract}
Let $d$ be the smallest positive integer, not a multiple of $3$, for which there exists an algebraic number $\al$ of degree $d$ over $\mathbb{Q}$ whose three algebraic conjugates add to zero. We prove that $d=20$. 
This is derived from the following result: 
for any linear relation $\sum_{j=1}^d a_j \al_j=0$ with coefficients $a_j\in\mathbb{Z}$ among the conjugates $\al_j$ of an algebraic number of degree $d=p^m$, where $p$ is a prime number, $m \geq 1$, the sum $\sum_{j=1}a_j$ is divisible by $p$. 
If $d=2p^m$, $p\geq 3$ and $\sum_{j=1}^d|a_d| < p$, then $\sum_{j=1}a_j$ is an even number.
\end{abstract}
\maketitle

\section{Introduction}\label{intro}

In 2004 Dubickas and Smyth \cite{DubickasSmyth2006} asked to prove or disprove the following: If $\alpha_1+\alpha_2+\alpha_3=0$ for three distinct algebraic conjugates $\al_1$, $\al_2$, $\al_3$ of an algebraic number $\alpha$ of degree $d$, then 3 divides $d$. 
Stong (see \cite{DubickasSmyth2006}) found a counterexample of degree $d=20$. He proved that the irreducible polynomial
\begin{equation}\label{example20}
f(t) = t^{20} + 4\cdot5^{9}\cdot t^{10} + 16\cdot5^{15}
\end{equation}
has three distinct roots that add to zero. 
Several authors (see, e.g., \cite{DubickasJankauskas2015,Virbalas2025a}) were interested in the following natural question: what is the smallest positive integer $d$, not a multiple of 3, for which there exists an algebraic number of degree $d$ such that some three of its conjugates sum to zero? The above-mentioned counterexample of Stong implies that $d\leq 20$. Dubickas and Jankauskas (see Theorem 1.2 in \cite{DubickasJankauskas2015}) showed that such a minimal value of $d$ lies in the range $10\leq d \leq 20$. By the result of \cite{Kurbatov1977}, $d$ cannot be a prime number. Thus, $d=10, 14, 16$ or $20$. Recently, Virbalas (see Theorem 1.1 in \cite{Virbalas2025a}) showed that $d\neq 2p$, where $p\geq 5$ is a prime number. Thus, either $d=16$ or $d=20$. The first result of the present paper states that $d=20$:

\begin{theorem}\label{p1}
Let $d>1$ be the smallest positive integer, not a multiple of $3$, for which there exists an algebraic number of degree $d$ whose three algebraic conjugates sum to zero. Then $d=20$.
\end{theorem}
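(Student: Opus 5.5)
Given the reductions recalled in the introduction, the minimal degree lies in $\{10,14,16,20\}$: Stong's polynomial \eqref{example20} gives $d\le 20$, Dubickas and Jankauskas give $d\ge 10$, Kurbatov rules out primes, and Virbalas rules out $d=2p$ with $p\ge 5$, so $10$ and $14$ are excluded. Thus the whole proof of Theorem~\ref{p1} reduces to excluding $d=16$. I would derive this from the general prime-power statement announced in the abstract: if $\al_1,\dots,\al_d$ are the conjugates of an algebraic number of degree $d=p^m$ and $\sum_j a_j\al_j=0$ with $a_j\in\Z$, then $p\mid\sum_j a_j$. Applied to $\al_1+\al_2+\al_3=0$ with $p=2$, this gives $2\mid 3$, which is absurd. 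Hence $d\ne 16$ and $d=20$.

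The real work is the prime-power lemma, which I would prove as follows. Let $K$ be the splitting field, $G=\mathrm{Gal}(K/\Q)$ acting transitively on the $d=p^m$ roots, and let $P$ be a Sylow $p$-subgroup of $G$. Since $p^m$ divides $[G:\mathrm{Stab}(\al_1)]$, the subgroup $P$ is still transitive on the roots. Apply to the relation every element of the fixed field $L=K^P$ side: averaging over $P$, or better, taking the trace from $K$ to $L$ of $\sigma(\sum a_j\al_j)$, gives relations. I would instead argue with group rings. Consider the $\Q$-linear map $\Q[X]\to K$, $e_j\mapsto \al_j$, where $X$ is the set of roots. Its kernel $R$ is a $\Q G$-submodule that contains $v=\sum a_je_j$. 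Reduce modulo $p$: form the lattice $R\cap\Z[X]$ and its image in $\F_p[X]$. The augmentation map $\varepsilon:\F_p[X]\to\F_p$ is $P$-invariant. Because $P$ is a $p$-group acting transitively on $X$, $\F_p[X]\cong\F_p[P/Q]$ is a uniserial-ish local module whose unique minimal submodule is spanned by $\sum_j e_j$, and whose unique maximal submodule is the augmentation kernel.

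The key claim is that the reduction $\bar R$ of $R\cap \Z[X]$ is a proper $P$-submodule. It is proper because $R$ does not contain the whole space: the $\al_j$ are not all zero, and $\dim R<d$. A saturated lattice has reduction of the same rank as $R$, so $\bar R\ne\F_p[X]$. For a $p$-group, $\F_p[P/Q]$ has a unique maximal submodule, namely the augmentation ideal, since the module is cyclic, generated by any $e_j$, with simple head $\F_p$. Therefore $\bar R\subseteq\ker\varepsilon$, so $\sum a_j\equiv 0\pmod p$ for any integer relation; dividing out by the gcd first handles non-primitive vectors, because saturation keeps $v$ in the lattice.

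I expect the main obstacle to be making the reduction step rigorous, namely that the saturated lattice reduces to a proper submodule. Then one checks the module-theoretic fact that a cyclic $\F_p P$-module has a unique maximal submodule. The latter holds because $\F_pP$ is local, so any proper submodule of a cyclic module lies in the radical times the module, which is exactly the augmentation kernel here.
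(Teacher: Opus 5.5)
Your proof is correct, but it is organized differently from the paper's.

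The paper's proof of Theorem~\ref{p1} does not use the literature reduction at all. It lists every $d<20$ with $3\nmid d$, notes that each is of the form $p^m$ with $p\ne 3$ or $2p^m$ with $p\ge 5$, and invokes Theorem~\ref{p2}. So $10$ and $14$ are handled by the paper's own Theorem~\ref{t2} rather than by Virbalas' result. You instead follow the reduction sketched in the introduction and need the prime-power statement (Theorem~\ref{t1}) only at $d=16$. Your lemma already covers the primes and $2,4,8$, so of the cited results only Virbalas' $d\ne 2p$ is genuinely needed.

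For the prime-power statement itself, your argument is the dual of the paper's. The paper passes to the orthogonal complement $\bar A^\perp$ and uses the fixed-point congruence for $p$-groups (Lemma~\ref{lefpte}) to find a nonzero $H$-fixed vector in it. Transitivity then forces this vector to be a multiple of $(1,\dots,1)$. In module language, the paper uses that the socle of the permutation module $\F_p[X]$ is the line spanned by $\sum_j e_j$.

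You use the head instead. Since $\F_p[X]\cong\F_p[P/Q]$ is cyclic over the local ring $\F_pP$, Nakayama puts every proper submodule inside $J\cdot\F_p[X]$. That submodule equals $\ker\varepsilon$, because the vectors $e_{gx}-e_x$ span the augmentation kernel when $P$ is transitive.

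The two arguments are equivalent via the $G$-invariant dot product. The paper's route needs only the elementary counting lemma; yours avoids orthogonal complements at the cost of using locality of $\F_pP$ and Nakayama.

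Some minor points:
\begin{itemize}
\item Properness of $\bar R$ only needs $\mathrm{rank}(R\cap\Z[X])<d$. The saturation and gcd remarks are unnecessary.
\item Calling $\F_p[X]$ ``uniserial-ish'' is inaccurate, since permutation modules of $p$-groups need not be uniserial. Nothing in your argument depends on it.
\item The transitivity of $P$ deserves its one-line justification: the $P$-orbit size $[P:P\cap\mathrm{Stab}(\al_1)]$ is divisible by $p^m$, as in Lemma~\ref{l1}.
\end{itemize}
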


The case $d=16$ is ruled out by setting in the parameter values $p=2$ and $m=4$ in Theorem~\ref{p2} which directly follows from Theorem~\ref{t1} and Theorem~\ref{t2}.  Theorem~\ref{p2} generalizes the recent result of Virbalas \cite{Virbalas2025a}:

\begin{theorem}\label{p2}
Let $f(t)$ be an irreducible polynomial of degree $d$ over the field $\Q$. If the degree $d$ is of the form $d=p^m$, where the prime number $p \ne 3$, or $d$ is of the form $d=2p^m$ for a prime $p\geq5$ with integer exponents $m \geq 1$, then for any three roots $\al_1, \al_2, \al_3$ of $f(t)$, $\al_1 + \al_2 \pm \al_3 \ne 0$.
\end{theorem}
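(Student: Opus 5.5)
Theorem~\ref{p2} will follow from the two divisibility results announced in the abstract (Theorems~\ref{t1} and~\ref{t2}). Here is how I would derive it from them, and how I would prove them.

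\emph{Reduction.} Suppose $\al_1+\al_2\pm\al_3=0$ for three roots of $f$. If two of the roots coincide, the relation becomes $2\al_1=\mp\al_3$ or $2\al_1=0$. In the first case, equal degrees (equal Mahler data under the map $\al\mapsto 2\al$) force $\al=0$ after iterating over the Galois orbit. In the second case $\al=0$ directly, and this contradicts $d>1$. So the roots may be assumed distinct. The relation is then $\sum a_j\al_j=0$ with coefficients $(1,1,\pm1)$, so $\sum a_j\in\{3,1\}$ and $\sum|a_j|=3$.

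\emph{Case $d=p^m$, $p\ne3$.} By Theorem~\ref{t1}, $p\mid\sum a_j$. We have $p\nmid 1$, and $p\nmid 3$ because $p\ne3$. This is a contradiction.

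\emph{Case $d=2p^m$, $p\ge5$.} Here $\sum|a_j|=3<p$, so Theorem~\ref{t2} says $\sum a_j$ is even. But $\sum a_j\in\{1,3\}$ is odd, again a contradiction.

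\emph{Proof of Theorem~\ref{t1}.} Let $G$ be the Galois group acting transitively on the $p^m$ roots, and let $P$ be a Sylow $p$-subgroup. Since the number of roots is a power of $p$, $P$ is still transitive. Let $K$ be the splitting field. Apply the trace over the fixed field of an appropriate subgroup, or equivalently average the relation over $P$. Each $\sigma\in P$ gives $\sum a_j\al_{\sigma(j)}=0$.

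Summing over all of $P$ gives $(\sum a_j)\,\frac{|P|}{p^m}\sum_j \al_j=0$. This only helps when the trace is nonzero, so a finer argument is needed. I would take a chain of normal subgroups of $P$ of index $p$ and study the relation modulo the augmentation ideal, working in the group ring $\Z[P]$ acting on the permutation module $\Q^{d}$. The relation vector $a=(a_j)$ lies in the kernel of the map $\Q[P/P_1]\to K$, $e_j\mapsto\al_j$.

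Next, reduce modulo $p$: $\F_p[P/P_1]$ is a local ring whose unique maximal ideal is the augmentation ideal. The kernel is a proper $P$-submodule, because the $\al_j$ are not all zero. Any proper submodule that is stable under $P$ should lie in the augmentation ideal, giving $\sum a_j\equiv0\pmod p$.

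The main obstacle is making this reduction rigorous, since the kernel is a $\Q$-module and not an $\F_p$-module. I would pass to the $\Z$-lattice of integer relations, which is saturated and $P$-stable, and reduce it mod $p$. The difficulty is showing that its image is proper, which needs saturation together with the fact that the image of $e_1$ is nonzero.

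\emph{Proof of Theorem~\ref{t2}.} For $d=2p^m$, a Sylow $p$-subgroup has two orbits of size $p^m$, or is transitive on blocks. I would use a Sylow $2$-subgroup combined with a block decomposition into pairs, and use the condition $\sum|a_j|<p$ to prevent the coefficients from wrapping around modulo $p$.
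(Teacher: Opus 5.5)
Your reduction is exactly the paper's proof of Theorem~\ref{p2}, which is a one-line appeal to Theorems~\ref{t1} and~\ref{t2}: the coefficient sum is $1$ or $3$, which is not divisible by $p\neq3$ and is odd, while $\sum|a_j|=3<p$ when $p\geq5$; the coincident-root case is harmless, and the same two theorems cover it anyway when applied to combined coefficients such as $(2,\pm1)$. Your sketches of Theorems~\ref{t1} and~\ref{t2} are not needed for this statement, but note that the first is the dual of the paper's fixed-vector argument once stated correctly (properness of $\bar A$ is immediate from $\dim_{\F_p}\bar A\leq\mathrm{rank}(A)<d$, and the transitive permutation module $\F_p^d$ of a $p$-group has the augmentation kernel as its unique maximal submodule, although $\F_p[P/P_1]$ is not a ring unless $P_1$ is normal in $P$), whereas the second is not yet a proof: the paper uses the two $p^m$-orbits $X,Y$ of the Sylow $p$-subgroup, an invariant vector in $\bar A^{\perp}$ constant on $X$ and on $Y$, and the $G$-translates of $X$, not a Sylow $2$-subgroup or blocks of size two.
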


The relations of the form $\alpha_1+\alpha_2 \pm \alpha_3=0$ between three algebraic conjugates of $\al$ is a particular case of a more general class of relations
\begin{equation}\label{eqin1}
a_1\alpha_1+a_2\alpha_2+\dotsb+a_d\alpha_d=0,
\end{equation}
where $\alpha_1,\alpha_2,\dotsc,\alpha_d$ are the algebraic conjugates (over $\Q$) of an algebraic number $\alpha$ of degree $d$ and $a_1,a_2,\dotsc,a_d$ are rational integers, not all zero. If $a_1=a_2=\dotsc=a_d$, the relation \eqref{eqin1} is called \textit{trivial}. One of the first general results was the aforementioned result of Kurbatov \cite{Kurbatov1977}, who proved that there are no non-trivial additive relations of the form \eqref{eqin1} if the degree $d=p$, $p$ -- a prime (see also \cite{Dixon1997} and \cite{Baron1995}). Girstmair in \cite{Girstmair1999} proposed a theoretical framework to study linear relations \eqref{eqin1} via the representations of the Galois group of the polynomial $f(t)$. A lot of attention was devoted to the investigation of the relations \eqref{eqin1} when both the degree $d$ and the coefficients of the linear relation are small, say, $|a_1|+|a_2|+\dotsc+|a_d| \leq 4$, $d \leq 8$. Refer to \cite{DubickasHareJankauskas2017,DubickasJankauskas2015, Girstmair1982, Girstmair2006, Girstmair2007, Girstmair2008, Lalande2010, Lalande2007} for the results related to the linear relation $\alpha_1+\alpha_2 \pm \alpha_3 = 0$,  see \cite{Baronenas2026} for the results related to the linear relations $\alpha_1=\alpha_2+\alpha_3+\alpha_4$ and $\alpha_1+\alpha_2=\alpha_3+\alpha_4$. For the classification of all possible relations in case when $d=4$ refer to \cite{DuVi25,Kitaoka2017}. Multiplicative analogs of the relations \eqref{eqin1} were investigated in \cite{DuVi25} and \cite{Serrano2025}.

The main contribution of a present paper to the literature on the linear relations of type \eqref{eqin1} are the next two results on the divisibility property of the coefficient sums, from which both Theorems \ref{p1} and \ref{p2} are derived. Theorem \ref{t1} concerns the case where the degree $d$ is the power of a prime.

\begin{theorem}\label{t1}
Let $\al_1$, $\al_2$, $\dots$, $\al_d$ be the roots of the irreducible polynomial $f(t) \in \Q[t]$ of degree $d = p^m$, where $p \in \Z$ is a prime number, $m=\ord_p(d) \geq 1$. Suppose that there are integers $a_j \in \mathbb{Z}$, $1 \leq j \leq d$, such that $\sum_{j=1}^d a_j \al_j = 0$.
Then $p$ divides $\sum_{j=1}^d a_j$.
\end{theorem}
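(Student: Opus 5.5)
The plan is to translate the relation into a statement about an integral lattice carrying a permutation action of the Galois group, and then use the module structure of a transitive $p$-group permutation module over $\F_p$.

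\textbf{Setup.} Let $G$ be the Galois group of $f$, acting transitively on $\{\al_1,\dots,\al_d\}$, hence on indices $\{1,\dots,d\}$. Consider the $\Z$-linear map
\[
\varphi:\Z^d\to\mathbb{C},\qquad (a_1,\dots,a_d)\mapsto \sum_j a_j\al_j ,
\]
and let $K=\ker\varphi$. If $\sum_j a_j\al_j=0$, then applying $\sigma\in G$ gives $\sum_j a_j\al_{\sigma(j)}=0$. So $K$ is stable under the permutation action of $G$ on coordinates. Moreover, $\Z^d/K$ embeds in $\mathbb{C}$, so it is torsion-free, hence free of some rank $r$. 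Since $d=p^m\geq 2$ and $f$ is irreducible, we have $\al_1\neq 0$. Then $e_1\notin K$, so $K\neq\Z^d$ and $r\geq 1$.

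\textbf{Sylow reduction.} Let $P$ be a Sylow $p$-subgroup of $G$. I would first check the standard fact that $P$ is still transitive on the $d=p^m$ roots. Put $H=G_{\al_1}$. The $P$-orbit of $\al_1$ has size $|P:P\cap H|$. Since $|P\cap H|$ divides the $p$-part of $|H|=|G|/p^m$, this orbit size is at least $p^m$, so $P$ is transitive. Hence $\F_p^d\cong \F_p[P/Q]$ as $\F_pP$-modules, where $Q=P\cap H$.

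\textbf{Key step.} Let $\varepsilon(a)=\sum_j a_j$ be the augmentation. Suppose, for contradiction, that some $a\in K$ has $p\nmid\varepsilon(a)$. Let $\bar K$ be the image of $K$ in $\F_p^d$; it is an $\F_pP$-submodule not contained in $\ker\bar\varepsilon$.

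The module $M=\F_p[P/Q]$ is cyclic, generated by the coset $Q$, so $M\cong \F_pP\otimes_{\F_pQ}\F_p$. Its radical quotient $M/J(\F_pP)M$ is one-dimensional, because the only simple $\F_pP$-module of a $p$-group is trivial and $\operatorname{Hom}_{P}(M,\F_p)\cong\operatorname{Hom}_Q(\F_p,\F_p)$ by Frobenius reciprocity. Hence $M$ has a unique maximal submodule, which must be $\ker\bar\varepsilon$. Therefore $\bar K=\F_p^d$, i.e. $K+p\Z^d=\Z^d$.

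\textbf{Conclusion.} From $K+p\Z^d=\Z^d$ we get $(\Z^d/K)\otimes\F_p=0$. But $\Z^d/K\cong\Z^r$ with $r\geq 1$, whose reduction mod $p$ is $\F_p^r\neq 0$. This is a contradiction, so $p\mid\sum_j a_j$ for every $a\in K$.

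The main point needing care is the assertion that $\ker\bar\varepsilon$ is the unique maximal submodule of the transitive permutation module over $\F_p$ for a $p$-group. I would prove it via Frobenius reciprocity as above. Alternatively, one can argue directly that any submodule containing a vector with nonzero coordinate sum contains an element generating $M$: $1-g$ acts nilpotently, so a vector $v$ with $\bar\varepsilon(v)\neq0$ differs from a scalar multiple of the generator $e_Q$ by an element of $J\cdot M$, and Nakayama's lemma finishes.
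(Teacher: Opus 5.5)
Your proof is correct. It is the dual of the paper's argument rather than the same argument.

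Both proofs share the same skeleton:
\begin{enumerate}
\item The relation lattice ($A$ in the paper, your $K$) is $G$-stable.
\item Its reduction mod $p$ is a proper subspace of $\F_p^d$. The paper gets this from $\operatorname{rank}A<d$; you get it from $\Z^d/K\cong\Z^r$ with $r\geq 1$.
\item The Sylow $p$-subgroup is transitive on the roots. Your orbit-size argument is the one in Lemma~\ref{l1}.
\end{enumerate}

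The two proofs differ in the module-theoretic step. The paper passes to the orthogonal complement $\bar A^{\perp}$, a nonzero $H$-invariant subspace. It applies the fixed-point congruence of Lemma~\ref{lefpte} to obtain a nonzero $H$-fixed vector there. Transitivity then forces that vector to be a multiple of $(1,\dots,1)$. In effect the paper uses that the socle ($P$-fixed space) of the transitive permutation module is the line of constant vectors.

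You stay with $\bar K$ itself and use the dual fact: the head $M/J(\F_pP)M$ is one-dimensional, so $\ker\bar\varepsilon$ is the unique maximal submodule. The standard inner product is permutation-invariant, so the permutation module is self-dual, and $\ker\bar\varepsilon=(1,\dots,1)^{\perp}$. Hence the two facts are equivalent under taking orthogonal complements.

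What each route buys:
\begin{itemize}
\item The paper's route needs only orbit counting.
\item Yours uses the Jacobson radical together with Frobenius reciprocity or Nakayama's lemma. Your fallback argument, that $v-\bar\varepsilon(v)e_Q\in JM$ because each $g-1$ lies in the augmentation ideal $=J(\F_pP)$, is nearly as elementary.
\item In exchange, yours gives directly the structural statement that every proper $P$-submodule of $\F_p^d$ lies in the augmentation kernel, with no dualization.
\end{itemize}
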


Theorem \ref{t2} deals with the case where the degree $d$ is twice the power of an odd prime.

\begin{theorem}\label{t2}
Let $f(t)$ be an irreducible polynomial over $\mathbb{Q}$ of degree $d = 2p^m$ with $p \ge 3$ being a prime. Then, for any linear relation  $\sum_{j=1}^d a_j \al_j = 0$ among the roots $\al_j$ of $f(t)$ with coefficients $a_j \in \Z$ that satisfy $\sum_{j=1}^d|a_d| < p$, the sum $\sum_{j=1}^d a_j$ is even.
\end{theorem}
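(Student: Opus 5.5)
The plan is to use a Sylow $p$-subgroup of the Galois group. Let $G$ be the Galois group of $f$, acting transitively on the roots $\al_1,\dots,\al_{2p^m}$, and let $P$ be a Sylow $p$-subgroup of $G$. The stabilizer $H$ of $\al_1$ has index $2p^m$, so $p^m$ divides $[G:H]$. Hence every $P$-orbit on the roots has size $[P:P\cap gHg^{-1}]\ge p^m$. Since the orbit sizes are powers of $p$ summing to $2p^m$, $P$ has exactly two orbits $O_1,O_2$, each of size $p^m$. Split the coefficients accordingly:
\[
S_1=\sum_{\al_j\in O_1}a_j,\qquad S_2=\sum_{\al_j\in O_2}a_j,\qquad S_1+S_2=\sum_{j=1}^d a_j .
\]
Set $\be_i=\sum_{\al_j\in O_i}\al_j$.

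The first step is to average the relation over $P$. Applying every $\sigma\in P$ to $\sum a_j\al_j=0$ and summing gives
\[
\frac{|P|}{p^m}\,(S_1\be_1+S_2\be_2)=0, \qquad\text{that is,}\qquad S_1\be_1+S_2\be_2=0 .
\]
The numbers $\be_1,\be_2$ are algebraic conjugates. Indeed, some $g\in G$ maps a root of $O_1$ to a root of $O_2$, so $g(\be_1)$ is the orbit sum of the conjugate Sylow subgroup $gPg^{-1}$. Replacing $P$ by a suitable conjugate, the same identity holds with the roles of the orbits matched.

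There are then three cases:
\begin{itemize}
\item If $\be_1\ne 0$ and $\be_2\ne\pm\be_1$, then $\be_2=r\be_1$ with $r=-S_1/S_2\in\Q$. Conjugates have equal norms, so $r=\pm1$, a contradiction unless $S_1=S_2=0$.
\item If $\be_2=\be_1\neq 0$, then $S_1+S_2=0$.
\item If $\be_2=-\be_1\ne0$, then $S_1=S_2$ and the total $2S_1$ is even.
\end{itemize}
In all these cases $\sum a_j$ is even.

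The remaining degenerate case is $\be_1=\be_2=0$, and here the hypothesis $\sum|a_j|<p$ must enter. There I would aim to show that $p\mid S_1$ and $p\mid S_2$. Since $|S_i|\le\sum|a_j|<p$, this would force $S_1=S_2=0$, and again $\sum a_j=0$ is even. To obtain the divisibility I would reuse the mechanism behind Theorem~\ref{t1}: restrict to the $p$-group $P$ acting transitively on the $p^m$-element set $O_1$, and view the relation module, i.e.\ the kernel of $\Z^{d}\to\Q(\al_1,\dots,\al_d)$, as a $\Z[P]$-module. The map sending a coefficient vector to $S_1$ (respectively $S_2$) should be shown to vanish modulo $p$ on this kernel. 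This uses that in characteristic $p$ the only $P$-fixed line of $\F_p^{O_i}$ is spanned by the all-ones vector, so the augmentation $\F_p^{O_i}\to\F_p$ is killed by anything that comes from a genuine relation.

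I expect this last step to be the main obstacle. The two orbits interact, and the relation need not split into separate relations on $O_1$ and on $O_2$. One must check that the $p$-adic argument of Theorem~\ref{t1} survives when the coefficient vector is spread over two $P$-orbits whose orbit sums vanish. If it only yields $p\mid S_1+S_2$ or $p\mid S_1-S_2$, that still suffices: the bound $\sum|a_j|<p$ then gives $S_1=\mp S_2$, and in either case $\sum a_j$ is even.
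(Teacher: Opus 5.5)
\textbf{There is a genuine gap, and it sits exactly where the theorem's content lies.} After averaging, everything hinges on the degenerate case $\be_1=\be_2=0$, which is the only place the hypothesis $\sum|a_j|<p$ can matter. There you give only a plan, and its main target ($p\mid S_1$ and $p\mid S_2$) is false.

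\emph{Counterexample to the target.} Take $\al=\zeta_9-\zeta_9^{-1}$ (degree $6$, $p=3$) and write $\al_k=\zeta_9^k-\zeta_9^{-k}$. The Sylow subgroup $\{1,4,7\}\subset(\Z/9\Z)^\times$ has orbits $\{\al_1,\al_4,\al_7\}$ and $\{\al_2,\al_5,\al_8\}$, and both orbit sums vanish. The relation $\al_1+\al_8=0$ has $S_1=S_2=1$, so $3\nmid S_1$.

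\emph{Why the mechanism stalls.} The fixed-point argument of Theorem~\ref{t1} only produces a nonzero $P$-fixed vector $x\mathbf{1}_{O_1}+y\mathbf{1}_{O_2}\in\bar{A}^\perp$. This gives $xS_1+yS_2\equiv 0\pmod p$ with an unknown ratio $x:y$; in the example above $y\equiv -x$. Nothing forces the augmentation of a single orbit to vanish.

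\emph{The missing idea.} As in the paper, you must bring in the whole group $G$. Applying the congruence to $g^{-1}\cdot a$ gives
\[
yT+(x-y)\sum_{j\in g(O_1)}a_j\equiv 0 \pmod p \quad\text{for all } g\in G,\qquad T=S_1+S_2 .
\]
If these partial sums vary mod $p$, then $x\equiv y$ and $p\mid T$. If they are all $\equiv C$, count over the $l=[G:\operatorname{Stab}_G(O_1)]$ translates of $O_1$: each index is covered $k$ times, $l=2k$, and $p\nmid l$. This gives $T\equiv 2C$, hence $C(x+y)\equiv 0$. Only then does $\sum|a_j|<p$ turn $p\mid T$ or $p\mid S_1-S_2$ into $T=0$ or $S_1=S_2$. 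This argument applies to every relation directly, so the paper needs neither your averaging nor any case split on $\be_1,\be_2$.

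\textbf{Your claim that $\be_1$ and $\be_2$ are conjugate is also false,} and replacing $P$ by a conjugate does not repair it: the two $P$-orbits need not be $G$-translates of each other. Let $G\cong A_4$ act on the six $2$-subsets of $\{1,2,3,4\}$, for instance $\al=1+r_1+r_2$ with $r_i$ the roots of $x^4+8x+12$. The orbits of $\langle(123)\rangle$ are a \emph{triangle} and a \emph{star}, giving $\be_1=3-2r_4$ and $\be_2=3+2r_4$, which are not conjugate.

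Your case list can be rescued without conjugacy, using $\be_1+\be_2=\operatorname{Tr}(\al)\in\Q$:
\begin{enumerate}
\item If $\be_1\notin\Q$, the averaged identity forces $S_1=S_2$.
\item If $\be_1\in\Q$, sum $g(\be_1)=\be_1$ over the translates of $O_1$ to get $l\be_1=k\operatorname{Tr}(\al)$, so $\be_1=\be_2=\operatorname{Tr}(\al)/2$.
\end{enumerate}
So the only remaining case is $\operatorname{Tr}(\al)=\be_1=\be_2=0$, which is precisely the one you left open.
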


This paper is organized as follows. In Section~\ref{auxiliary} we state two lemmas which are used in the proofs of Theorem~\ref{t1} and \ref{t2}. The preliminary setup before the proofs of these two theorems is given in Section~\ref{setup}. Section~\ref{proofs} contains the proofs of Theorem~\ref{p1}, \ref{t1} and \ref{t2}. 
In Section~\ref{open} we discuss possible values of $d$ up to 100 for which there exists an algebraic number of degree $d\geq 3$ whose three algebraic conjugates sum to zero.

\section{Auxiliary results}\label{auxiliary}

The following proposition is an immediate consequence of \cite[Lemma~2.2]{BAMBERG2022107}; see also \cite[Theorem~3.4$^\prime$]{Wielandt1964}. For completeness, we include its proof.

\begin{lemma}\label{l1}
    Let $G$ be the Galois group (over $\Q$) of an irreducible polynomial $f(t) \in \Q[t]$ and let $H<G$ be the Sylow $p$--subgroup. If the degree of $f(t)$ is $d=p^m$, where the integer exponent of a prime $p$ is $m \geq 1$,  then $H$ acts transitively on the roots $\al_1$, $\dots$, $\al_d$ of $f(t)$. If the degree $d=2p^m$, $p \geq 3$, $m \geq 1$, then there are exactly two orbits of size $p^m$ of roots $\al_j$ under the action of $H$. 
\end{lemma}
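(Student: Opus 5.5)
The plan is to use only the orbit--stabilizer theorem together with the Sylow theorems; nothing specific to polynomials is needed beyond $G$ acting transitively on the $d$ roots, which holds because $f(t)$ is irreducible. Fix a root $\al_1$ and let $G_1$ be its stabilizer in $G$. Then $[G:G_1]=d$, so $|G| = d\,|G_1|$. Write $|G| = p^{k} n$ with $p\nmid n$, so that $|H| = p^k$.

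\emph{First case: $d=p^m$.} Let $\mathcal{O}$ be any $H$-orbit. Its size $|H:H\cap G_j|$, where $G_j$ is the stabilizer of a root $\al_j\in\mathcal{O}$, is a power of $p$. I will show that some orbit has size at least $p^m$, which forces transitivity because $d=p^m$. The stabilizer $G_1$ has order $p^{k-m}n$. Consider the double cosets $HgG_1$; the $H$-orbits on roots correspond to the double cosets $H\backslash G/G_1$. The orbit of $g\al_1$ has size $|H|/|H\cap gG_1g^{-1}|$. Now $H\cap gG_1g^{-1}$ is a $p$-subgroup of $gG_1g^{-1}$, which has order $p^{k-m}n$, so $|H\cap gG_1g^{-1}|\le p^{k-m}$. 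Hence every $H$-orbit has size at least $p^{k}/p^{k-m}=p^m=d$. Thus there is exactly one orbit, which proves transitivity.

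\emph{Second case: $d=2p^m$ with $p\ge 3$.} The same computation gives $|G_1| = 2p^{k-m}n/ (2)$ up to the factor $2$: precisely, $p^k n = 2p^m|G_1|$, so the $p$-part of $|G_1|$ is exactly $p^{k-m}$, using that $p$ is odd. The argument above then shows that every $H$-orbit has size at least $p^m$, and being a power of $p$, each orbit size is $p^m$ or larger. The orbits partition a set of $2p^m$ elements, so their sizes are powers of $p$ that are each $\ge p^m$ and sum to $2p^m$. A single orbit would need size $2p^m$, which is not a power of $p$ since $p\ne 2$. An orbit of size $p^{m+1}\ge 3p^m$ is too large. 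Hence there are exactly two orbits, each of size $p^m$.

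The main point requiring care is the bound $|H\cap gG_1g^{-1}|\le p^{k-m}$. This uses that the $p$-part of $|G_1|$ equals $|G|_p/d_p$, which follows from $|G|=d|G_1|$ and the fact that $d_p=p^m$ in both cases. The rest is bookkeeping.
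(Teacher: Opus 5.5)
Your proof is correct and follows essentially the same route as the paper. Both arguments apply orbit--stabilizer to $H$ acting on a root and observe that the $p$-part of a root stabilizer is $|G|_p/p^m$, so every $H$-orbit has size divisible by, hence at least, $p^m$; both then count orbit sizes in the case $d=2p^m$ (your garbled line should simply read $|G_1|=p^{k-m}n/2$, whose $p$-part is $p^{k-m}$ because $p$ is odd, and your bound $p^{m+1}\ge 3p^m>2p^m$ correctly covers $p=3$, which the lemma statement includes even though the paper's proof text says $p\ge 5$).
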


\begin{proof}[Proof of Lemma \ref{l1}]
We first treat the case $d=p^m$. The polynomial $f(t)$ is irreducible over $\Q$, therefore, the group $G$ acts on its roots transitively. Take any root $\al_j$ of $f(t)$ and consider its stabilizer subgroup $K=\text{Stab}(\al_j)$ in $G$. By the orbit stabilizer theorem, $p^m = [G : K]$. Let the Sylow $p$-subgroup of $G$ be of the order $|H|=p^n$, where $n \geq m$ is the exact exponent of $p$ in the prime factorization of $|G|$. From the subgroup index relation $|G|=[G:K]\cdot|K|$, the exact power of $p$ that divides $|K|$ is $p^{n-m}$. On the other hand, the size of the $H$--orbit of $\al_j$ is $[H: H \cap K] = |H|/|H \cap K|=p^n/|H \cap K|$. The exact power of $p$ dividing $|H \cap K|$ is at most $p^{n-m}$. Therefore, $[H : H \cap K]$ is divisible by $p^n/p^{n-m}=p^m$. This means every one of the $d=p^m$ roots $\al_j$ belongs to the $H$--orbit of $\al$, so $H$ acts transitively on the roots of $f(t)$.

\noindent Consider the case $d=2p^m$ with $p \geq 5$. By the same argument as in the previous case, we see that the size of the $H$--orbit of any root $\al_j$ is a multiple of $p^m$. The orbit size also must be a power of $p$, since the index $[H:H \cap K]$ divides the order of $H$, $|H|=p^n$. As there are $2p^m$ different roots $\al_j$ in total, then orbit size must be equal to $p^m$, and there must be exactly two such orbits.
\end{proof}

\begin{lemma}[Chapter I Lemma~6.3 in \cite{Lang2002}]\label{lefpte}
Let $p$ be a prime number and let $G$ be a $p$-group acting on a finite set $X$. If $|X|$ is divisible by $p$, then the number of fixed points of $G$ is also divisible by $p$. 
\end{lemma}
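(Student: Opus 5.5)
The statement in question is Lemma~\ref{lefpte}, the standard fixed-point congruence for $p$-groups. I would prove it by decomposing $X$ into $G$-orbits and counting modulo $p$.

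The key steps run as follows.

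First, $X$ is the disjoint union of its $G$-orbits. So $|X| = \sum_{O} |O|$, where $O$ runs over the distinct orbits.

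Second, for each $x \in X$ the orbit–stabilizer theorem gives $|Gx| = [G:\mathrm{Stab}(x)]$. This number divides $|G|$, which is a power of $p$, so every orbit has size $p^k$ for some $k \geq 0$.

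Third, an orbit has size $1$ exactly when it consists of a single fixed point. Every other orbit has size $p^k$ with $k \geq 1$, and hence size divisible by $p$.

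Putting these together, write $X^G$ for the set of fixed points. Then
\[
|X| = |X^G| + \sum_{|O|>1} |O| \equiv |X^G| \pmod p .
\]
Since $p$ divides $|X|$ by hypothesis, it follows that $p$ divides $|X^G|$.

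There is no genuine obstacle here. The only point needing care is the second step: one should note that the index of a subgroup of a $p$-group is itself a power of $p$, which follows from Lagrange's theorem. The argument also holds when $X^G$ is empty, since $0$ is divisible by $p$.
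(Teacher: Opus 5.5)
Your proof is correct: it is the standard orbit-decomposition argument, where every non-trivial orbit of a $p$-group has size a positive power of $p$, so $|X| \equiv |X^G| \pmod p$. The paper gives no proof of its own and only cites Lang (Chapter~I, Lemma~6.3), whose proof is this same orbit count.
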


\section{Preliminary setup before proofs}\label{setup}

Let $\al_1$, $\al_2$, $\dots$, $\al_d$ be the roots of the irreducible polynomial $f(t) \in \Q[t]$ of degree $d>1$. 
 We define the set of integer linear relations among the roots of $f(t)$ as:
\[
A = \left\{ (a_1, \dots, a_d) \in \mathbb{Z}^d: \sum_{j=1}^d a_j \al_j = 0\right\}.
\]
Because the sum of two relations is a relation, and an integer multiple of a relation is a relation, $A$ is a $\mathbb{Z}$-submodule of $\mathbb{Z}^d$. We may assume that $A \ne \{O\}$, for otherwise, only the trivial relation with $a_1=\dots=a_d=0$ holds, and the statements of our theorems in this case are trivial. Note that the module $A$ is saturated: for every vector $v \in \Z^d$, the relation $kv \in A$ for a non-zero integer $k$ means $v \in A$. Also, $A$ must be of $\text{rank}(A)<d$: for otherwise, one immediately obtains $\al_1 = \dots = \al_d = 0$, which contradicts the irreducibility of $f(t)$ of degree $d>1$.

 Let $G$ be the Galois group (over $\Q$) of the polynomial $f(t)$. The automorphism $g \in G$ maps each root $\al_j$ to $\al_{g(j)}$ and sends the linear combination $\sum_{j=1}^d a_j\al_j$ to $\sum_{j=1}^d a_{g^{-1}(j)}\al_j$. Let $G$ act on vectors $v \in \Z^d$ by
\[
g \cdot (v_1, v_2, \dots, v_d) := \left(v_{g^{-1}(1)}, v_{g^{-1}(2)}, \dots, v_{g^{-1}(d)}\right) \in \Z^d.
\]
This defines the (left) action of $G$ on $\Z^d$ that preserves $A$. Next, we reduce it modulo a prime number $p$:
\[
\Z^d \mapsto \Z^d / p\Z^d \cong \F_p^d, \qquad \overline{v} := v + p\Z^d.
\]
Here, $\F_p$ denotes the finite field with $p$ elements, and $\F_p^d$ is a $d$--dimensional vector space over $\F_p$. The action of $G$ on $\Z^d$ descends to $\Z^d / p\Z^d$ by setting $g \cdot \overline{v} := \overline{g \cdot v}$. Since $A$ is saturated, by the rank-nullity theorem, its image $\bar{A}$ under reduction $\pmod{p}$ is a proper subspace of $\F_p^d$ of dimension $0 < \dim_{\F_p}\bar{A} \leq\text{ rank}(A)<d$. Now let
\[
\bar{A}^{\perp} := \left\{\overline{u} \in \F_p^d: \overline{u} \cdot \overline{v} = \overline{0} \in \F_p \text{ for every } \overline{v} \in \bar{A}\right\}.
\]
be the orthogonal complement of $\bar{A}$ in $\F_p^d$ with respect to the standard inner product:
\[
\langle \overline{u}, \overline{v} \rangle=\overline{(u_1, u_2, \dots, u_d)} \cdot \overline{(v_1, v_2, \dots, v_d)} = \overline{u_1 v_1 + \dots +u_dv_d}.
\]
The standard dot product is invariant with respect to the coordinate permutations, because $\sum_{i=1}^d u_{g^{-1}(i)} v_{g^{-1}(i)} = \sum_{j=1}^d u_j v_j$. It follows that $\bar{A}$ and $\bar{A}^{\perp}$ are $G$--invariant, proper subspaces of $\F_p^d$. By restricting to a subgroup, they are also $H$--invariant for any subgroup $H < G$.

\section{Proofs of main results}\label{proofs}

\begin{proof}[Proof of Theorem \ref{t1}]
Let $G$ be the Galois group of $f(t)$ over $\Q$ and let $H < G$ be its Sylow $p$--subgroup. Consider the module $\bar{A}^{\perp}$ defined in Section \ref{setup}. The set $\bar{A}^\perp$ is a proper $H$--invariant subspace of $\F_p^d$ of dimension $r$, $0 < r < d$. Therefore, the number of elements $\abs{\bar{A}^\perp}=p^r>1$. By applying Lemma~\ref{lefpte}, one finds that the cardinality of the set of fixed points under this action,
\[
\bar{A}_H^\perp =\{\overline{u} \in \bar{A}^\perp: h \cdot \overline{u} = \overline{u} \text{ whenever } h \in H\},
\]
satisfies the congruence $\abs{\bar{A}_H^\perp} \equiv 0 \pmod{p}$. Since $\bar{A}_H^\perp$ contains the zero vector $\overline{(0, \dots, 0)}$, its cardinality $\abs{\bar{A}_H^\perp}$ is non-zero and must be divisible by $p$. It follows that $\bar{A}_H^\perp$ contains at least one vector $\overline{u} \ne \overline{(0, 0, \dots,0)}$ that is fixed by every automorphism $h \in H$. According to Lemma \ref{l1}, the Sylow $p$--subgroup $H$ of $G$ acts on the coordinates of the vectors $\overline{v} \in \bar{A}$ and $\bar{A}^{\perp}$ transitively. However, a non-zero vector $\overline{u} \in \bar{A}^\perp$ that is a invariant under transitive coordinate permutations must take the form $\overline{u}=\overline{(c,\dots,c)}=\overline{c} \cdot \overline{(1, \dots, 1 )}$, $\overline{c} \in \F_p$, $\overline{c} \ne \overline{0}$. Then $\overline{w}=\overline{(1, \dots, 1 )} \in \bar{A}^\perp$. Hence, for every vector $a=(a_1, \dots, a_d) \in A$,
\[
\langle \overline{w}, \overline{a} \rangle = \overline{a_1 + \dots + a_d}=\overline{0} \in \F_p.
\]
In other words, $p$ divides the sum $\sum_{j=1}a_j$.
\end{proof}

\begin{proof}[Proof of Theorem \ref{t2}]

Recall that $G$ is the Galois group acting on the roots $\al_1$, $\al_2$, $\dots$, $\al_d$  of the polynomial $f(t)$ with  $H<G$ being its Sylow $p$--subgroup.

Using the previously described construction (see Section~\ref{setup}), one defines the saturated module of relations $A \subset \mathbb{Z}^d$, reduces it modulo $p$ and takes the orthogonal complement $\bar{A}^\perp \subset \mathbb{F}_p^d$ that are invariant under the coordinate permutations by $G$ (and $H$). By the same argument as in Theorem $\ref{t1}$, there exists a vector $u \in \Z^d$, such that $\overline{u} \in \bar{A}^\perp$, $\overline{u} \ne \overline{O}_d$, is a fixed point under the action of $H$.

 According to Lemma \ref{l1}, the roots $\al_j$ of $f(t)$ fall into a two disjoint orbits under the action of $H$. Denote their root index sets by $X$ and $Y$, respectively. The coordinates of a fixed vector $\overline{u}$ then must be constant on the orbits of $H$. By reducing $u$ modulo $p$ if necessary, we may assume that there exist $x, y \in \Z$ such that $u_i = x$ for all $i \in X$ and $u_i = y$ for all $i \in Y$. Because $\overline{u}\ne \overline{O}_d$, $x$ and $y$ are not both zero modulo $p$.

For any relation $a = (a_1, \dots, a_d) \in A$, the dot product of $\langle \overline{u}, \overline{v}\rangle = \overline{0}$ in $\F_p$. Define the partial sums $S_X(a) := \sum_{j \in X} a_j$ and $S_Y(a) := \sum_{j \in Y} a_j$. The orthogonality condition yields:
\begin{equation}\label{ort}
x S_X(a) + y S_Y(a) \equiv 0 \pmod{p}.
\end{equation}
Let also $T(a) := \sum_{i=1}^d a_j = S_X(a) + S_Y(a)$. Substitute $S_Y(a) = T(a) - S_X(a)$ into the congruence:
\[
y T(a) + (x - y) S_X(a) \equiv 0 \pmod{p}.
\]
Because $A$ is $G$--invariant, for any $g \in G$, $a \in A$, $g^{-1} \cdot a = \left(a_{g(1)}, \dots, a_{g(d)}\right) \in A$. The total sum is invariant:
$$ T(g^{-1} \cdot a) = \sum_{i=1}^n a_{g(i)} = \sum_{k=1}^n a_k = T(a). $$
The partial sum over $X$ is:
$$ S_X(g^{-1} \cdot a) = \sum_{i \in X} a_{g(i)}. $$
Let $j = g(i)$. As $i$ ranges over $X$, $j$ ranges over $g(X)$. Therefore,
$$ S_X(g^{-1} \cdot a) = \sum_{j \in g(X)} a_j. $$
The substitution $a \mapsto g^{-1} \cdot a$ yields the congruence
\begin{equation}\label{congr}
y T(a) + (x - y) \sum_{j \in g(X)} a_j \equiv 0 \pmod p \quad \text{for all } g \in G.
\end{equation}
One looks at two cases regarding the dependence of $\sum_{j \in g(X)} a_j \pmod p$ on $g$.

\noindent\emph{Case 1: $\sum_{j \in g(X)} a_j \pmod p$ is not independent of $g$.} Suppose there exist $g_1, g_2 \in G$ such that $\sum_{j \in g_1(X)} a_j \not\equiv \sum_{j \in g_2(X)} a_j \pmod p$. Evaluating the relation \eqref{congr} at $g_1$ and $g_2$ and subtracting the two resulting congruences yields:
\[
(x - y) \left( \sum_{j \in g_1(X)} a_j - \sum_{j \in g_2(X)} a_j \right) \equiv 0 \pmod p.
\]
Because the second factor is non-zero modulo $p$, we must have $x - y \equiv 0 \pmod p$, which means $x \equiv y \pmod p$. Since $x$ and $y$ are not both zero modulo $p$, we have $y \not\equiv 0 \pmod p$. The congruence simplifies to $y T(a) \equiv 0 \pmod p$. This implies $T(a) \equiv 0 \pmod p$.

\noindent\emph{Case 2: $\sum_{j \in g(X)} a_j \pmod p$ is independent of $g$.} Set $C := \sum_{j \in g(X)} a_j \pmod p$ for all $g \in G$. 
Let $\Omega = \{g(X) \mid g \in G\}$ be the set of distinct translates of $X$. Let $K_X = \mathrm{Stab}_G(X) = \{ g \in G \mid g(X) = X \}$. The cardinality of $\Omega$ is the index $l = [G : K_X]$. Because $X$ is an orbit of the Sylow $p$-subgroup $H$, the group $H$ stabilizes $X$ set-wise, meaning $H \le K_X \le G$. Consequently, the index $[G : K_X]$ must divide $[G : H]$. Since $H$ is a Sylow $p$-subgroup, $[G : H]$ is coprime to $p$, which implies that $l$ is coprime to $p$.

We sum the coefficients $a_j$ over the elements of all different sets $B = g(X) \in \Omega$:
\[
S := \sum_{B \in \Omega} \sum_{j \in B} a_j.
\]
Because $G$ acts transitively on the indices $\{1, \dots, d\}$, every index $j$ is contained in exactly the same number of sets in $\Omega$. Let this multiplicity be $k$. Counting the total number of elements in all sets in two ways yields $l|X| = kd$, which gives $l p^m = k (2 p^m)$, implying $l = 2k$. Because $l$ is coprime to $p$, $k$ must also be coprime to $p$. Furthermore, this shows $l$ is an even integer. Then:
\[
S = \sum_{j=1}^d k a_j = k \sum_{j=1}^d a_j = k T(a).
\]
On the other hand, $\sum_{j \in B} a_j \equiv C \pmod p$,  so $S \equiv l C \pmod p$. Hence,
\[
k T(a) \equiv l C \equiv 2k C \pmod{p}.
\]
As $k$ is coprime to $p$, one must have $T(a) \equiv 2C \pmod p$. Putting this expression for $T(a)$ back into \eqref{congr}:
\[
2Cy + (x - y)C \equiv C(x+y) \equiv 0 \pmod{p}.
\]
This forces either $T(a) \equiv 0 \pmod{p}$ or $x + y \equiv 0 \pmod p$.

By the hypothesis of Theorem \ref{t2}, $\sum_{i=1}^d |a_i| < p$. This means that the integer $T(a)$ is strictly less than $p$ in absolute value. If $T(a) \equiv 0 \pmod{p}$, then, in view of $|T(a)| < p$, we obtain that $T(a)=0$. So $T(a)$ is even integer in this case. If $T(a) \not\equiv 0 \pmod{p}$, then $x = -y  \pmod{p}$. By the orthogonality equation \eqref{ort},
\[
x S_X(a) + y S_Y(a) \equiv x \left(S_X(a) - S_Y(a)\right) \equiv 0 \pmod{p}.
\]
Since $x$ and $y$ are not both zero modulo $p$ and $y \equiv -x \pmod p$, one has $x \not\equiv 0 \pmod p$. Thus, $S_X(a) - S_Y(a) \equiv 0 \pmod p$. The absolute value of this difference is bounded by:
\[
|S_X(a) - S_Y(a)| \le |S_X(a)| + |S_Y(a)| \le \sum_{i \in X} |a_i| + \sum_{i \in Y} |a_i| = \sum_{i=1}^d |a_i| < p.
\]
Thus, the difference must be zero identically, which means $S_X(a) = S_Y(a)$. Then the total sum $T(a) = S_X(a) + S_Y(a) = 2 S_X(a)$ is, again, even. This concludes the proof of Theorem 2.
\end{proof}

\begin{proof}[Proof of Theorem \ref{p2}]
It is an immediate consequence of Theorem~\ref{t1} and Theorem~\ref{t2}.
\end{proof}

\begin{proof}[Proof of Theorem \ref{p1}]
Every  positive integer which is strictly smaller than 20 and which is not a multiple of $3$, namely,
\[
2,\; 4=2^2, \; 5, \; 7, \; 8=2^3, \; 10=2\cdot5, \; 11, \; 13, \; 14=2\cdot 7,\; 16=2^4, \; 17, \; 19
\]
is of the form $p^m$, $p \ne 3$, or of the form $2p^m$, $p \geq 5$. 
Applying Theorem~\ref{p2} we obtain that none of these integers equals $d$. Hence, $d\geq 20$. 
In view of the example of Stong \eqref{example20}, we obtain that $d=20$.
\end{proof}

\section{Open cases}\label{open}

If tree distinct algebraic conjugates $\al_1$, $\al_2$, $\al_3$ of degree $d>1$ (over $\Q$) satisfy the linear relation $\al_1+\al_2+\al_3=0$, then, for each $s \in \N$, one can produce new algebraically conjugate numbers $\be_1$, $\be_2$, $\be_3$ of degree $ds$ (all degrees are over $\Q$) that also satisfy the same linear relation $\be_1 + \be_2 + \be_3=0$. Indeed, it suffices to consider the product $\be_j=\al_j(a+\ga)$, $j=1,2,3$, where an algebraic number $\ga$ is of degree $s$ over $\Q$ and over $\Q(\al_1)$ and $a$ is a rational integer. Then for sufficiently large $a$ the degree of $\be_j$ is $ds$. In this way, starting with a cubic polynomial $f(t)=t^3-t-1$, one can construct new algebraically conjugate solutions $\al_1$, $\al_2$, $\al_3$ to the equation $\al_1+\al_2+\al_3=0$ for each degree $d=3s$, $s \in \N$. Likewise, starting with an example of Stong \eqref{example20} of degree $20$, one can produce new solutions of degree $d=20s$, for each $s \in \N$. If we take a list of all integers in the interval $[2, 100]$ and cross out all multiples of $3$, $20$ and all the degrees forbidden by Theorem \ref{p2} (prime powers and two times the power of an odd prime) we are left with the following values of degree $d$:
\[
28, 35, 44, 52, 55, 56, 65, 68, 70, 76, 77, 85, 88, 91, 92, 95.
\]
for which neither solutions to the equation $\al_1+\al_2+\al_3=0$ in conjugate algebraic numbers $\al_1$, $\al_2$, $\al_3$ of degree $d$ are currently known, nor the non-existence of such solutions have been established.

\begin{acknowledgment}
    We thank the anonymous referee for suggesting Theorems~\ref{t1}
and~\ref{t2} and for providing their proofs. The referee informed us
that these proofs were generated with the assistance of
\texttt{ChatGPT} (OpenAI, GPT-5.5 Pro). We have independently verified
and revised the proofs and take full responsibility for their
correctness. Our original proof of Theorem~\ref{p1} used a different
approach and established only the special case \(d=16\).
\end{acknowledgment}

\printbibliography

\end{document}